\newtheorem{theorem}{Theorem}[section]
\newtheorem*{theorem*}{Theorem}
\newtheorem{question}[theorem]{Question}
\newtheorem{proposition}[theorem]{Proposition}
\newtheorem{corollary}[theorem]{Corollary}
\theoremstyle{definition}
\newtheorem{definition}[theorem]{Definition}
\theoremstyle{remark}
\newtheorem{remark}[theorem]{Remark}
\numberwithin{equation}{section}
\DeclareMathOperator{\conv}{conv}
\DeclareMathOperator{\LASQ}{LASQ}
\DeclareMathOperator{\positiveLASQ}{positive\ LASQ}
\DeclareMathOperator{\WASQ}{WASQ}
\DeclareMathOperator{\ASQ}{ASQ}
\DeclareMathOperator{\positiveASQ}{positive\ ASQ}
\DeclareMathOperator{\LDtwoP}{LD2P}
\DeclareMathOperator{\DtwoP}{D2P}
\DeclareMathOperator{\SDtwoP}{SD2P}
\begin{document}
\title{Locally almost square Banach lattices}	
\author{\href{https://orcid.org/0000-0002-1500-8123}{Stefano Ciaci}}
\address{Institute of Mathematics and Statistics, University of Tartu, Narva mnt 18, 51009 Tartu, Estonia}
\email{stefano.ciaci@ut.ee}
\urladdr{\url{https://stefanociaci.science.blog/}}
\thanks{This work was supported by the Estonian Research Council grant (PSG487).}
\subjclass[2020]{46B04, 46B20, 46B42}
\keywords{Almost square, Banach lattice, Diameter two property}
\begin{abstract} 
    A Banach space is \textit{locally almost square} if, for every $y$ in its unit sphere, there exists a sequence $(x_n)$ in its unit sphere such that $\lim\|y\pm x_n\|=1$. A Banach space is \textit{weakly almost square} if, in addition, we require the sequence $(x_n)$ to be weakly null. It is known that these two properties are distinct, so we aim to investigate if local almost squareness implies a weaker version of the latter property by replacing the sequence with a net. In order to achieve this result, we restrict ourselves to Banach lattices and introduce a strengthening of local almost squareness by requiring that the sequence is in the positive cone of the lattice.

    As an application of such characterization, we prove that this positive variant of local almost squareness implies that every relatively weakly open set in the unit ball has diameter 2, that is, the Banach space has the so called \textit{diameter two property}. This in particular allows us also to generate new examples of Banach spaces enjoying the diameter two property.
\end{abstract}

\maketitle

\section{Introduction}\label{Section: Introduction}

Let $X$ be a Banach space.
\begin{enumerate}
    \item $X$ is \textbf{locally almost square} ($\LASQ$) if, for every $y\in S_X$ and $\varepsilon>0$, there exists $x\in S_X$ satisfying $\|y\pm x\|\le1+\varepsilon$.
    \item $X$ is \textbf{weakly almost square} ($\WASQ$) if, for every $y\in S_X$, there exists a weakly null sequence $(x_n)$ in $X$ satisfying $\lim\|y\pm x_n\|=1$ and $\lim\|x_n\|=1$.
    \item $X$ is \textbf{almost square} ($\ASQ$) if, for every $y_1,\ldots,y_n\in S_X$ and $\varepsilon>0$, there exists $x\in S_X$ satisfying $\|y_i\pm x\|\le1+\varepsilon$ for all $1\le i\le n$.
\end{enumerate}

These properties were defined in \cite{ALL2016}. In the same paper the authors also proved that being $\ASQ$ implies being $\WASQ$, on the other hand the fact that being $\WASQ$ implies being $\LASQ$ is trivial. It is known that $L_1[0,1]$ is $\WASQ$ but it is not $\ASQ$, but the fact that being $\LASQ$ is not equivalent to being $\WASQ$ was just proved in the recent preprint \cite{KV2022-preprint}. This last result is the starting point of our investigation.

In this paper we aim to prove that being $\LASQ$ implies possessing a property which is very close to being $\WASQ$, namely by substituting the sequence with a net. In order to do so, we have to restrict ourselves to the class of Banach lattices and we introduce a slight strengthening of being $\LASQ$ in which we additionally require the ``M-orthogonal element'' to be in the positive cone of the lattice. We call this property $\positiveLASQ$ (see Definition~\ref{Definition: LASQ+}). Before explaining more in detail the content of the paper let us recall that a slice of the unit ball $B_X$ is the intersection of $B_X$ with some half-space and let us remind of a few more properties that a Banach space $X$ can enjoy.
\begin{enumerate}
    \item $X$ has the \textbf{local diameter two property} ($\LDtwoP$) if every slice of $B_X$ has diameter 2.
    \item $X$ has the \textbf{diameter two property} ($\DtwoP$) if every non-empty relatively weakly open set in $B_X$ has diameter 2.
    \item $X$ has the \textbf{strong diameter two property} ($\SDtwoP$) if every convex combination of slices in $B_X$ has diameter 2.
\end{enumerate}

These properties were introduced in \cite{ALN2013} and it is known that if a Banach space is $\LASQ$, then it has the $\LDtwoP$ \cite[Proposition~2.5]{K2014}, if it is $\WASQ$, then it has the $\DtwoP$ \cite[Proposition~2.6]{K2014} and if it is $\ASQ$, then it has the $\SDtwoP$ \cite[Proposition~2.5]{ALL2016}.

\subsection{Content of the paper}\label{Subsection: Content of the paper}\hfill

In Section~\ref{Section: Main result} we prove the main result of the paper.

\begin{theorem*}
    Let $X$ be a Banach lattice. If $X$ is $\positiveLASQ$, then, for every $y\in S_X$, there exists a weakly null net $(x_\alpha)$ in $X$ satisfying
    \begin{equation*}
        \lim\|x_\alpha\|=1\text{ and }\lim\|y\pm x_\alpha\|=1.
    \end{equation*}
\end{theorem*}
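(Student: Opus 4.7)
My plan is to construct the net indexed over the directed set $\Lambda:=\{(F,\varepsilon):F\subseteq X^*\text{ finite},\,\varepsilon\in(0,1)\}$, ordered by $(F,\varepsilon)\preceq(F',\varepsilon')\iff F\subseteq F'$ and $\varepsilon'\le\varepsilon$. For each $\alpha=(F,\varepsilon)$ it suffices to produce $x_\alpha\in X$ with $\|x_\alpha\|\ge 1-\varepsilon$, $\|y\pm x_\alpha\|\le 1+\varepsilon$, and $|f(x_\alpha)|<\varepsilon$ for every $f\in F$; the three convergences in the conclusion then follow from the order on $\Lambda$.

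The lattice inequality $|y\pm u|\le|y|+u$, valid for $u\in X_+$, lets me pass through positive LASQ applied to $|y|\in S_X$ to conclude that the set
\[
A := \bigl\{u\in X_+ : \|u\|=1,\ \|y\pm u\|\le 1+\varepsilon/2\bigr\}
\]
is nonempty. The central claim is that $0\in\overline{\conv(A)}^{\,w}$. Granting it, extract a convex combination $z=\sum_i\lambda_i a_i\in\conv(A)$ with $|f(z)|<\varepsilon$ for every $f\in F$; then $z\in X_+$, $\|y\pm z\|\le 1+\varepsilon/2$ by convexity of the norm, and monotonicity of the lattice norm on $X_+$ yields $\|z\|\ge\max_i\lambda_i>0$. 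I would then invoke positive LASQ once more on the normalization $z/\|z\|$, combined with a triangle-inequality adjustment using positivity, to finally obtain $x_\alpha$ of norm at least $1-\varepsilon$ with all three desired properties (after minor tolerance juggling).

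The principal obstacle is establishing $0\in\overline{\conv(A)}^{\,w}$, which I would attack by a Hahn--Banach contradiction. If $0$ is not in the weak closure, a separating $g\in X^*$ with $g(u)\ge c>0$ for all $u\in A$ exists; since $A\subseteq X_+$ we may replace $g$ by $g^+$ and assume $g\ge 0$. To derive a contradiction, iterate positive LASQ starting from $|y|$: set $y_0:=|y|$, and at step $n$ apply positive LASQ to $y_{n-1}/\|y_{n-1}\|$ with small tolerance $\delta\ll\varepsilon$, obtaining positive unit vectors $u_n$ and defining $y_n:=y_{n-1}+\|y_{n-1}\|u_n$. The inequality $|y\pm u_n|\le|y|+u_n\le y_{n-1}+u_n$ combined with the LASQ bound keeps each $u_n$ (for $n$ in a suitable range) inside an enlargement of $A$, so $g(u_1+\cdots+u_n)\ge nc'$ for some fixed $c'>0$. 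A careful upper bound on $\|u_1+\cdots+u_n\|$ via monotonicity of the lattice norm and the telescoping $u_n=(y_n-y_{n-1})/\|y_{n-1}\|$ should then, for appropriate $\delta$ and large $n$, contradict $g(u_1+\cdots+u_n)\le\|g\|\,\|u_1+\cdots+u_n\|$. This recentering step --- ensuring the iterates stay in $A$ relative to the fixed $y$ as the base point drifts --- together with the downstream control of $\|x_\alpha\|$ near unity, is precisely where positivity of the cone and monotonicity of the lattice norm are indispensable.
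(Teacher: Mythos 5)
Your reduction to the claim $0\in\overline{\conv(A)}^{\,w}$ loses exactly the information the theorem needs, and the step that is supposed to recover it does not work. Granting the claim, you obtain $z=\sum_i\lambda_i a_i\in\conv(A)$ with $|f(z)|<\varepsilon$ on $F$; but $\|z\|$ is only bounded below by $\max_i\lambda_i$, which can be as small as $1/n$, so $z$ itself is useless, and your proposed repair --- applying $\positiveLASQ$ to $z/\|z\|$ --- produces a fresh unit vector $w$ over which you have no control of $f(w)$ for $f\in F$ (so weak nullity of the net is lost) and no control of $\|y\pm w\|$ (the bound $\|z/\|z\|\pm w\|\le1+\delta$ says nothing about $|y|$, which need not be dominated by any multiple of $z$). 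The two requirements --- smallness on $F$ and norm-compatibility with $y$ --- must be realized by the \emph{same} vector, and no single application of $\positiveLASQ$ after the fact can restore the first. The claim itself is also genuinely too weak to finish from: with equal weights, monotonicity of the norm on the positive cone gives only $\|a_i+a_j\|\le n\|z\|$, a quantity that is always at least $1$ and may be large, so one cannot extract from a nearly-null convex combination a pair whose sum (hence whose difference, and whose interaction with $|y|$) has norm close to $1$.

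Ironically, your Hahn--Banach contradiction already contains the correct mechanism, which is essentially the paper's proof: iterate $\positiveLASQ$ recentred at the growing sums $y_n=y_{n-1}+\|y_{n-1}\|u_n$, which forces all partial sums $\||y|+u_1+\cdots+u_n\|$ to stay below $\prod_k(1+\delta_k)$. (Note that with a fixed tolerance $\delta$ this bound is $(1+\delta)^n$, which grows exponentially, so your intended contradiction $nc'\le\|g\|\,\|u_1+\cdots+u_n\|$ fails for large $n$; you must either choose $n$ first and then $\delta$, or let $\delta_k$ shrink so that $\prod_k(1+\delta_k)\le1+\eta$, as the paper does.) The paper then does not pass to convex combinations: it extracts a weak$^*$-convergent subnet of $(u_n)$ in $B_{X^{**}}$ and uses the \emph{differences} $u_\alpha-u_\beta$. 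These are small on any finite $F\subseteq X^*$ for far-out indices because both terms tend to the same weak$^*$ limit; they satisfy $\|y\pm(u_\alpha-u_\beta)\|\le\||y|+u_\alpha+u_\beta\|\le1+\eta$ by positivity and the partial-sum bound; and $\|u_\alpha-u_\beta\|\ge2-\|u_\alpha+u_\beta\|\ge1-\eta$ by the triangle inequality. Redirect your iteration to produce this sequence and take differences, rather than proving $0\in\overline{\conv(A)}^{\,w}$, and the argument closes; as written, it has a genuine gap at the final step.
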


In addition, using the same technique, we prove that, under the additional assumption that $X^*$ is separable, if $X$ is $\positiveLASQ$, then $X$ is $\WASQ$ (see Theorem~\ref{Theorem: LASQ implies WASQ}).

In Section~\ref{Section: The diameter two property} we use the main result to prove that being $\positiveLASQ$ implies enjoying the $\DtwoP$ (see Theorem~\ref{Theorem: LASQ implies D2P}). From this result, we extend known stability results for $\LASQ$ spaces to $\positiveLASQ$ spaces and, as a consequence, we provide new examples of Banach spaces which enjoy the $\DtwoP$. Namely, we show that, if $X$ is $\positiveLASQ$, then both Köthe-Bochner spaces $E(X)$ and ultrapowers $X^\mathscr F$ enjoy the $\DtwoP$ (see Corollaries~\ref{Corollary: Köthe-Bochner spaces} and \ref{Corollary: ultrapowers}). Moreover, if $X$ is $\positiveASQ$ and $Y$ is an AL-space, then the projective tensor product $X\hat{\otimes}_\pi Y$ has the $\DtwoP$ (see Corollary~\ref{Corollary: projective tensor product}).

\subsection{Notation}\label{Subsection: Notation}\hfill

We consider only real Banach lattices. For a Banach lattice $X$ we denote by $B_X$ its closed unit ball, by $S_X$ its unit sphere and we denote by $S^+_X$ the intersection of $S_X$ with the positive cone of the lattice. The rest of the notation for Banach spaces and lattices is standard and we refer the reader to \cite{AB2006}.

\section{Main result}\label{Section: Main result}

\begin{definition}\label{Definition: LASQ+}
    We say that a Banach lattice $X$ is $\positiveLASQ$ if, for every $y\in S_X$ and $\varepsilon>0$, there exists $x\in S^+_X$ satisfying $\|y\pm x\|\le1+\varepsilon$.
\end{definition}

Notice that being $\positiveLASQ$ trivially implies being $\LASQ$. However, $L_1[0,1]$ is $\LASQ$ but it cannot be $\positiveLASQ$, since it is clear that no AL-space can be $\positiveLASQ$.

\begin{remark}\label{Remark: equivalent definition LASQ+}
    In Definition~\ref{Definition: LASQ+} we can require only that $\|y+x\|\le1+\varepsilon$ and $y\in S^+_X$. This is equivalent because, for every $y\in S_X$ and $\varepsilon>0$, there exists $x\in S^+_X$ satisfying $\||y|+x\|\le1+\varepsilon$, and, since $|y\pm x|\le|y|+x$, then
    \begin{equation*}
        \|y\pm x\|\le\||y|+x\|\le1+\varepsilon.
    \end{equation*}
\end{remark}

The following proposition gives us a class of Banach lattices which are $\positiveLASQ$.

\begin{proposition}\label{Proposition: AM-space LASQ implies LASQ+}
    Let $X$ be an AM-space. If $X$ is $\LASQ$, then $X$ is $\positiveLASQ$.
\end{proposition}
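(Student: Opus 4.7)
The plan is to combine Remark~\ref{Remark: equivalent definition LASQ+} with the vector-lattice identity $|a+b|\vee|a-b|=|a|+|b|$ and the defining norm property of an AM-space.

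By Remark~\ref{Remark: equivalent definition LASQ+}, it is enough to show that for every $y\in S_X^+$ and every $\varepsilon>0$ there exists $x\in S_X^+$ with $\|y+x\|\le 1+\varepsilon$. So fix such $y$ and $\varepsilon$. Since $X$ is $\LASQ$, pick $x\in S_X$ with $\|y\pm x\|\le 1+\varepsilon$. After replacing $x$ by $-x$ if necessary (which does not affect the two-sided estimate), I can assume $\|x^+\|\ge\|x^-\|$.

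Next I would argue that $\|x^+\|=1$. In any Banach lattice $\||x|\|=\|x\|=1$, and in an AM-space the disjointness $x^+\wedge x^-=0$ together with $|x|=x^+\vee x^-$ yields
\begin{equation*}
    1=\||x|\|=\|x^+\vee x^-\|=\max(\|x^+\|,\|x^-\|)=\|x^+\|,
\end{equation*}
so $x^+\in S_X^+$. The candidate in the positive cone is therefore $x^+$, and the remaining task is to bound $\|y+x^+\|$.

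For this, I would use the standard vector-lattice identity $|y+x|\vee|y-x|=|y|+|x|$. Since $y\ge 0$, this rewrites as $y+|x|=|y+x|\vee|y-x|$. Applying the AM-space property $\|u\vee v\|=\max(\|u\|,\|v\|)$ for $u,v\ge 0$ gives
\begin{equation*}
    \|y+|x|\|=\max(\|y+x\|,\|y-x\|)\le 1+\varepsilon.
\end{equation*}
Finally, $0\le x^+\le|x|$ and $y\ge 0$ imply $0\le y+x^+\le y+|x|$, so monotonicity of the lattice norm yields $\|y+x^+\|\le\|y+|x|\|\le 1+\varepsilon$, as required.

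There is no real obstacle here: the only nontrivial ingredients are the identity $|y+x|\vee|y-x|=|y|+|x|$, which holds in every vector lattice, and the AM-space formula $\|u\vee v\|=\max(\|u\|,\|v\|)$ on the positive cone; both are standard. The key insight is simply that the AM-norm lets one convert the symmetric condition $\|y\pm x\|\le 1+\varepsilon$ into control on $\|y+|x|\|$, after which passing from $|x|$ to $x^+$ is free by monotonicity.
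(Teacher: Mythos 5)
Your proof is correct and follows essentially the same route as the paper: both hinge on converting the two-sided estimate $\|y\pm x\|\le1+\varepsilon$ into control of $\|y+|x|\|$ via the lattice identity relating $y+|x|$ to $|y+x|\vee|y-x|$ together with the AM-norm formula $\|u\vee v\|=\max(\|u\|,\|v\|)$ on the positive cone. The only difference is your final detour from $|x|$ to $x^+$, which is harmless but unnecessary, since $\||x|\|=\|x\|=1$ in any Banach lattice and $|x|$ itself already serves as the positive witness (as in the paper's proof).
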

\begin{proof}
    Let $y\in S_X$ and $\varepsilon>0$. Since $X$ is $\LASQ$, there exists $x\in S_X$ satisfying $\|y\pm x\|\le1+\varepsilon$. Notice that
    \begin{equation*}
        |y+|x||=|y+(x\vee-x)|=|(y+x)\vee(y-x)|\le|y+x|\vee|y-x|,
    \end{equation*}
    therefore
    \begin{equation*}
        \|y+|x|\|\le\||y+x|\vee|y-x|\|=\max\{\|y+x\|,\|y-x\|\}\le1+\varepsilon.
    \end{equation*}
\end{proof}

Let us now state and prove the main result of the paper.

\begin{theorem}\label{Theorem: Main result}
    Let $X$ be a Banach lattice. If $X$ is $\positiveLASQ$, then, for every $y\in S_X$, there exist a weakly null net $(x_\alpha)$ in $X$ satisfying
    \begin{equation*}
        \lim\|x_\alpha\|=1\text{ and }\lim\|y\pm x_\alpha\|=1.
    \end{equation*}
\end{theorem}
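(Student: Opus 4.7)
The plan is to construct the weakly null net by choosing, for each pair $\alpha = (F,\eta)$ with $F \subset X^*$ finite and $\eta > 0$ (directed in the obvious way by $F \subseteq F'$ and $\eta' \le \eta$), an element $x_\alpha \in X$ satisfying $\|x_\alpha\| \in [1-\eta, 1+\eta]$, $\|y \pm x_\alpha\| \le 1+\eta$, and $|f(x_\alpha)| \le \eta$ for every $f \in F$. The matching lower bound $\|y \pm x_\alpha\| \ge 2\|y\| - \|y \mp x_\alpha\| \ge 1-\eta$ is automatic from the reverse triangle inequality, so all three norm limits collapse to $1$.

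First, I iterate $\positiveLASQ$ starting from $|y| \in S^+_X$ to produce many positive unit vectors whose sum has small norm. Set $u_0 := |y|$; at step $i$, apply $\positiveLASQ$ with parameter $\delta/N$ (where $\delta > 0$ is small and $N$ is large, to be tuned later) to $u_{i-1}/\|u_{i-1}\|$, yielding $z_i' \in S^+_X$ with $\|u_{i-1}/\|u_{i-1}\| + z_i'\| \le 1 + \delta/N$. Letting $z_i := \|u_{i-1}\|\,z_i'$ and $u_i := u_{i-1}+z_i$ gives $\|u_i\| \le (1+\delta/N)\|u_{i-1}\|$, hence $\|u_N\| \le (1+\delta/N)^N \le 1+2\delta$. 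Normalizing $\bar z_i := z_i/\|z_i\| \in S^+_X$ is legitimate since $\|z_i\| = \|u_{i-1}\| \ge 1$, and this implies $\sum_i \bar z_i \le \sum_i z_i$, so $\||y| + \sum_i \bar z_i\| \le \|u_N\| \le 1+2\delta$.

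Next, I extract $x_\alpha$ as a difference $\bar z_i - \bar z_j$ via pigeonhole on the values $(f(\bar z_i))_{f \in F} \in [-1,1]^{|F|}$. Partitioning this cube into subcubes of side $\eta/2$ and taking $N$ large enough that some cell $C$ contains at least three indices, any pair $i,j \in C$ satisfies $|f(\bar z_i - \bar z_j)| \le \eta/2$ for every $f \in F$. If in addition every such pair had $\|\bar z_i - \bar z_j\| \le 1-\eta$, the triangle inequality centered at any fixed index of $C$ would force $\|\sum_{i \in C}\bar z_i\| \ge |C| - (|C|-1)(1-\eta) = 1 + (|C|-1)\eta$, which for $\delta < \eta$ contradicts $\|\sum_{i \in C} \bar z_i\| \le \|\sum_k \bar z_k\| \le 1+2\delta$. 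Hence some pair realizes $\|\bar z_i - \bar z_j\| > 1-\eta$; setting $x_\alpha := \bar z_i - \bar z_j$, the lattice inequality $|\bar z_i - \bar z_j| \le \bar z_i + \bar z_j \le \sum_k \bar z_k$ gives both $\|x_\alpha\| \le 1+2\delta$ and $\|y \pm x_\alpha\| \le \||y| + |x_\alpha|\| \le \||y| + \sum_k \bar z_k\| \le 1+2\delta$. Choosing $\delta \le \eta/2$ from the outset closes all the bounds.

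The main obstacle is the lattice-combinatorial step above: one has to exploit that a family of positive unit vectors with sum-norm close to $1$ in a Banach lattice cannot have all pairwise distances bounded away from the maximum, since otherwise the sum would be forced to be much larger. This is precisely the ingredient that makes the $\positiveLASQ$ hypothesis (and the positive cone) do genuine work beyond what $\LASQ$ alone could provide, and it is where the reduction to an AM-type behaviour on the joint span of $(\bar z_i)$ is implicitly carried out.
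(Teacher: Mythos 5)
Your argument is correct, and it reaches the conclusion by a genuinely different route in its second half. The first half coincides with the paper's: both iterate the $\positiveLASQ$ property starting from $|y|$ to manufacture positive unit vectors $\bar z_1,\dots,\bar z_N$ with $\bigl\||y|+\sum_k\bar z_k\bigr\|\le 1+2\delta$ (the paper keeps every term normalized throughout the recursion, while you normalize at the end using $\|z_i\|=\|u_{i-1}\|\ge1$; this is only bookkeeping). The divergence is in how an almost-weakly-null difference is extracted. The paper builds, for each $m$, an \emph{infinite} sequence, passes to a weak$^*$-convergent subnet in $B_{X^{**}}$, and uses the fact that differences of far-apart terms of that subnet eventually lie in any prescribed weak neighborhood of $0$; the resulting net is indexed by $\mathbb N\times\mathscr U$. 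You instead fix the finite set $F\subset X^*$ and the tolerance $\eta$ in advance, build only finitely many vectors, and locate a good pair by pigeonhole on the cube of functional values. This is more elementary and quantitative: it avoids weak$^*$ compactness and subnets altogether, at the price of having to re-run the construction for each index $(F,\eta)$. Two small remarks. First, your contradiction argument via $\bigl\|\sum_{i\in C}\bar z_i\bigr\|\ge 1+(|C|-1)\eta$ is correct but unnecessary for the lower bound on $\|x_\alpha\|$: for \emph{every} pair $i\ne j$ one has $2=\|2\bar z_i\|\le\|\bar z_i+\bar z_j\|+\|\bar z_i-\bar z_j\|$ and, by positivity, $\|\bar z_i+\bar z_j\|\le\bigl\|\sum_k\bar z_k\bigr\|\le 1+2\delta$, whence $\|\bar z_i-\bar z_j\|\ge 1-2\delta$ directly — this is exactly the triangle-inequality trick the paper uses, and it lets the pigeonhole serve only its essential purpose of controlling $|f(\bar z_i-\bar z_j)|$. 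Second, for the cube $[-1,1]^{|F|}$ to be the right target you should normalize $F\subset B_{X^*}$ (or rescale $\eta$ by $\max_{f\in F}\|f\|$); this is harmless since weak neighborhoods of $0$ are generated by such sets.
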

\begin{proof}
    Fix $y\in S_X$ and let us temporarily fix some $m\in\mathbb N$. Find a sequence $(\varepsilon_n^m)$ of strictly positive reals such that $\prod(1+\varepsilon_n^m)\le1+m^{-1}$. We now construct a sequence $(y_n^m)\subset S^+_X$ by recursion over $n\in\mathbb N$.
    
    At first define $y_0^m:=|y|$. Now assume that we are given $y_0^m,\ldots,y_n^m$ and set
    \begin{equation*}
        \hat{y}_n^m:=\frac{\sum_{i=0}^ny_i^m}{\left\|\sum_{i=0}^ny_i^m\right\|}\in S^+_X
    \end{equation*}
    and, since $X$ is $\positiveLASQ$, we can find $y_{n+1}^m\in S^+_X$ satisfying $\|\hat{y}_n^m\pm  y_{n+1}^m\|\le1+\varepsilon_n^m$. This concludes the construction of the sequence $(y_n^m)$.
    
    Since $B_{X^{**}}$ is weak$^*$ compact, then there exists a subnet $\{y_\alpha^m:\alpha\in\mathscr A_m\}$ of $(y_n^m)$ which is weak$^*$ convergent to some point $y_m^{**}\in X^{**}$. Define $x_{\alpha,\beta}^m:=y^m_\alpha-y_\beta^m$, where $\alpha,\beta\in\mathscr A_m$ and notice that
    \begin{align*}
    1 & =\|y_0^m\|\le\left\|\sum_{i=0}^{n+1}y_i^m\right\|=\left\|\left\|\sum_{i=0}^ny_i^m\right\|\cdot \hat{y}_n^m+y_{n+1}^m\right\| \le(1+\varepsilon_n^m)\left\|\sum_{i=0}^ny_i^m\right\|\le\ldots\\
    & \le\prod_{i=1}^n(1+\varepsilon_i^m)\le1+m^{-1}.
    \end{align*}
    holds for all $n\in\mathbb N$. Moreover,
    \begin{equation*}
        \|y\pm x_{\alpha,\beta}^m\|=\|y\pm(y_{n_\alpha}^m-y_{n_\beta}^m)\|\le\left\|\sum_{i=0}^{\max\{n_\alpha,n_\beta\}}y_i^m\right\|\le1+m^{-1}
    \end{equation*}
    and
    \begin{equation*}
        \|x_{\alpha,\beta}^m\|=\|y_{n_\alpha}^m-y_{n_\beta}^m\|\le\left\|\sum_{i=0}^{\max\{n_\alpha,n_\beta\}}y_i^m\right\|\le1+m^{-1}
    \end{equation*}
    hold for all $\alpha,\beta\in\mathscr A_m$ satisfying $n_\alpha\not=n_\beta$. Moreover, it is clear that $\|y^m_\alpha\pm y^m_\beta\|\le1+m^{-1}$ holds too, therefore
    \begin{equation*}
        2=\|2y_\alpha^m\|\le\|y_\alpha^m+y_\beta^m\|+\|y_\alpha^m-y_\beta^m\|\le\|y_\alpha^m+y_\beta^m\|+1+m^{-1}.
    \end{equation*}
    From this, and an analogous argument, we conclude that $\|y^m_\alpha\pm y^m_\beta\|\ge1-m^{-1}$. Similarly one can prove that $\|y\pm x^m_{\alpha,\beta}\|\ge1-m^{-1}$ too. To sum up, so far we proved that
    \begin{equation}\label{Equation 1}
        (\forall\alpha,\beta\in\mathscr A: n_\alpha\not=n_\beta)\hspace{0.1cm}|\|y\pm x^m_{\alpha,\beta}\|-1|\le m^{-1}
    \end{equation}
    and
    \begin{equation}\label{Equation 2}
        (\forall\alpha,\beta\in\mathscr A: n_\alpha\not=n_\beta)\hspace{0.1cm}|\|x_{\alpha,\beta}^m\|-1|\le m^{-1}.
    \end{equation}
    
    Now we proceed to construct the desired net. For this purpose, let us call $\mathscr U$ the neighborhood filter around $0$ in $X$ endowed with the weak topology and let us choose, for every $(m,U)\in\mathbb N\times\mathscr U$, some $x_{m,U}\in U$, where $x_{m,U}=x^m_{\alpha,\beta}$ for some $\alpha,\beta\in\mathscr A$ satisfying $n_\alpha\not=n_\beta$. Notice that such $\alpha$ and $\beta$ can be found because, for all $m\in\mathbb N$, $x^m_{\alpha,\beta}=y^m_\alpha-y^m_\beta$, where the net $(y^m_\alpha)$ is weak$^*$ convergent to $y^{**}_m$, and the map $\alpha\rightarrow n_\alpha$ is both cofinal and increasing. Now notice that $\mathbb N\times\mathscr U$ is a directed set if endowed with the natural preorder
    \begin{equation*}
        (m,U)\le(m',U'):\iff m\le m'\text{ and }U'\subset U.
    \end{equation*}
    
    It is easy to verify that the net $\{x_{m,U}:(m,U)\in\mathbb N\times\mathscr U\}$ is weakly null and, thanks to (\ref{Equation 1}) and (\ref{Equation 2}), satisfies
    \begin{equation*}
        \lim\|x_{m,U}\|=1\text{ and }\lim\|y\pm x_{m,U}\|=1.
    \end{equation*}
\end{proof}

Notice that the assumption of $X$ being $\positiveLASQ$ and the lattice structure over $X$ are necessary only during the construction of the sequence $(y_n^m)$. So the following question arises naturally.

\begin{question}\label{Question: Main result without lattice}
    Let $X$ be a Banach space. If is $\LASQ$, then does it imply that, for every $y\in S_X$, there exists a net $(x_\alpha)$ in $X$ satisfying the conditions of Theorem~\ref{Theorem: Main result}?
\end{question}

The following criterion is obtained through a few modifications in the proof of Theorem~\ref{Theorem: Main result}.

\begin{theorem}\label{Theorem: LASQ implies WASQ}
    Let $X$ be a Banach lattice such that $X^*$ is separable. If $X$ is $\positiveLASQ$, then $X$ is $\WASQ$.
\end{theorem}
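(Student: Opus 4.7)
The strategy is to modify the proof of Theorem~\ref{Theorem: Main result} in the two places where a net appears, replacing each by a sequence. The additional hypothesis that $X^*$ is separable enters only through its standard consequences: bounded subsets of $X^{**}$ are weak$^*$-metrizable, and bounded subsets of $X$ are weakly metrizable.

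I would repeat the recursive construction verbatim: for each fixed $m\in\mathbb N$, build the sequence $(y_n^m)\subset S_X^+$ with $\|\hat y_n^m\pm y_{n+1}^m\|\le1+\varepsilon_n^m$, and retain the norm estimates that follow. The first use of a net in the original argument is the extraction of a weak$^*$-convergent subnet of $(y_n^m)$ in $X^{**}$ via Banach--Alaoglu. Under our hypothesis, weak$^*$-compactness on bounded sets upgrades to weak$^*$-sequential compactness, so I can pull out a weak$^*$-convergent subsequence $(y_{n_k}^m)_k$ and form $x_{k,l}^m:=y_{n_k}^m-y_{n_l}^m$ for $k\neq l$. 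The analogues of (\ref{Equation 1}) and (\ref{Equation 2}) remain valid, and since $x_{k,l}^m\to0$ in the weak$^*$ topology of $X^{**}$ with limit $0\in X$, the convergence is in fact weak in $X$.

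The second appearance of a net in the original proof is the final indexing over $\mathbb N\times\mathscr U$. Here I would invoke metrizability of the weak topology on a bounded subset of $X$ containing all of our elements, say by a metric $d$: for each $m$, since $x_{k,l}^m\to0$ weakly as $k,l\to\infty$, I can choose $k_m\neq l_m$ with $d(x_{k_m,l_m}^m,0)<1/m$. The sequence $z_m:=x_{k_m,l_m}^m$ is then weakly null in $X$, and the estimates inherited from (\ref{Equation 1})--(\ref{Equation 2}) yield $\lim\|z_m\|=1$ and $\lim\|y\pm z_m\|=1$, which is exactly the $\WASQ$ condition.

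The main obstacle to watch is that this single diagonal sequence $(z_m)$ must itself be weakly null, not merely that each fixed-$m$ family $(x_{k,l}^m)_{k,l}$ is weakly null as $k,l\to\infty$. This is precisely what the metrizability of the weak topology on bounded subsets of $X$ buys us, and it is the only place where the hypothesis on $X^*$ is genuinely used beyond what the lattice proof of Theorem~\ref{Theorem: Main result} already delivers.
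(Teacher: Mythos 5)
Your proposal is correct and takes essentially the same route as the paper: extract a weak$^*$-convergent subsequence of $(y_n^m)$ via metrizability of $(B_{X^{**}},\text{weak}^*)$, form differences (weakly null in $X$ since they converge weak$^*$ to $0\in X$), and diagonalize over $m$. The paper phrases the diagonal step via a dense sequence $(x_n^*)$ in $X^*$ rather than a metric on bounded sets, but this is the same mechanism.
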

\begin{proof}
    Since $X^*$ is separable, then $B_{X^{**}}$, with respect to the weak$^*$ topology, is metrizable. Hence, in the proof of Theorem~\ref{Theorem: Main result}, we can replace the subnet $(y_\alpha^m)$ with a subsequence $(y_{n_k}^m)$ and simply define $x_{k}^m:=y_{n_k}^m-y_{n_{k-1}}^m$ which is clearly weakly null. Let now $(x^*_n)$ be a dense sequence in $X^*$. For every $m\in\mathbb N$ there exists $n_m\in\mathbb N$ satisfying $|x^*_i(x^m_n)|\le m^{-1}$ for all $1\le i\le m$ and $n\ge n_m$. Then the sequence $(x_{n_m}^m)$ is weakly null and, thanks to (\ref{Equation 1}) and (\ref{Equation 2}), satisfies
    \begin{equation*}
        \lim\|x^m_{n_m}\|=1\text{ and }\lim\|y\pm x^m_{n_m}\|=1.
    \end{equation*}
\end{proof}

\section{The diameter two property}\label{Section: The diameter two property}

It is known that if a Banach space is $\LASQ$, then it has the $\LDtwoP$, and, if a Banach space is $\WASQ$, then it has the $\DtwoP$. Thanks to Theorem~\ref{Theorem: Main result} we can extend these results.

\begin{theorem}\label{Theorem: LASQ implies D2P}
    Let $X$ be a Banach lattice. If $X$ is $\positiveLASQ$, then $X$ has the $\DtwoP$.
\end{theorem}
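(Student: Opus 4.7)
The plan is to combine Theorem~\ref{Theorem: Main result} with a standard renormalisation trick that converts a weakly null perturbation of a sphere point into two admissible points in any prescribed relatively weakly open subset of $B_X$. Let $W\subset B_X$ be non-empty and relatively weakly open; after shrinking, I may assume
\begin{equation*}
    W=\{z\in B_X:|x_i^*(z-z_0)|<\varepsilon,\ i=1,\ldots,n\}
\end{equation*}
for some $z_0\in W$, functionals $x_1^*,\ldots,x_n^*\in X^*$ and $\varepsilon>0$. The aim is to exhibit two elements of $W$ at distance arbitrarily close to $2$.

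First I would check that $X$ is infinite-dimensional: Theorem~\ref{Theorem: Main result} supplies a weakly null net with $\lim\|x_\alpha\|=1$, which would be impossible in finite dimensions where weak and norm topologies coincide. With infinite-dimensionality in hand, I would pick a non-zero $u\in\bigcap_{i=1}^n\ker x_i^*$. Since $t\mapsto\|z_0+tu\|$ is continuous, equals $\|z_0\|\le1$ at $t=0$, and tends to $+\infty$ as $t\to+\infty$, there is some $t\ge0$ with $\|z_0+tu\|=1$. Setting $y:=z_0+tu$ produces a point in $W\cap S_X$, since the added vector lies in the common kernel of the defining functionals.

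Next I would apply Theorem~\ref{Theorem: Main result} to this $y\in S_X$ to obtain a weakly null net $(x_\alpha)$ with $\lim\|x_\alpha\|=1$ and $\lim\|y\pm x_\alpha\|=1$. To land back in $B_X$, I set $\lambda_\alpha:=\max\{\|y+x_\alpha\|,\|y-x_\alpha\|,1\}$ and define
\begin{equation*}
    u_\alpha^\pm:=\frac{y\pm x_\alpha}{\lambda_\alpha}\in B_X.
\end{equation*}
Since $\lambda_\alpha\to1$ and $x_\alpha\to0$ weakly, the nets $(u_\alpha^\pm)$ converge weakly to $y\in W$, so eventually $u_\alpha^\pm\in W$. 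Finally,
\begin{equation*}
    \|u_\alpha^+-u_\alpha^-\|=\frac{2\|x_\alpha\|}{\lambda_\alpha}\longrightarrow 2,
\end{equation*}
which forces $\operatorname{diam} W\ge2$; the reverse inequality is automatic since $W\subset B_X$.

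The main obstacle I anticipate is the opening reduction to a point $y\in W\cap S_X$: the net produced by Theorem~\ref{Theorem: Main result} is anchored at a unit sphere element, while a generic relatively weakly open subset of $B_X$ may sit entirely inside the open unit ball. The infinite-dimensionality observation together with the walk along a direction annihilated by all $x_i^*$ handles this cleanly; once the sphere point is located, the rest is a routine renormalisation that profits from both $\|x_\alpha\|\to1$ and $\|y\pm x_\alpha\|\to1$.
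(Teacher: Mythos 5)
Your proof is correct and follows essentially the same route as the paper: locate a unit-sphere point $y$ of the given relatively weakly open set, apply Theorem~\ref{Theorem: Main result} at $y$, and conclude from $\|(y+x_\alpha)-(y-x_\alpha)\|=2\|x_\alpha\|\to2$ together with the weak convergence of $y\pm x_\alpha$ to $y$. Your reduction to a basic weak neighbourhood, the walk along $\bigcap_{i=1}^n\ker x_i^*$ to reach $S_X$, and the explicit normalisation by $\lambda_\alpha$ simply spell out the steps the paper compresses into its reduction to the slice intersection $U_\delta$ and its ``perturbation argument''.
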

\begin{proof}
    Let $U$ be a non-empty relatively weakly open set in $B_X$ and $\varepsilon>0$. Notice that
    \begin{equation*}
        U_\delta:=\bigcap_{i=1}^n\{x\in B_X:x^*_i(x)\ge1-\delta\}\subset U
    \end{equation*}
    for some $0<\delta\le\varepsilon$ and $x^*_1,\ldots,x^*_n\in S_{X^*}$. Since $X$ is $\LASQ$, then it is infinite dimensional, hence we can find $y\in U_{\delta/2}\cap S_X$ and construct the net $(x_\alpha)$ as in Theorem~\ref{Theorem: Main result} with respect to $y$. It is now clear that we can find $\alpha$ satisfying $|x^*_i(x_\alpha)|\le\delta/2$ for all $1\le i\le n$ and
    \begin{equation*}
        1-\delta/2\le\|x_\alpha\|,\|y\pm x_\alpha\|\le1+\delta/2.
    \end{equation*}
    Therefore $\|y+x_\alpha-(y-x_\alpha)\|=2\|x_\alpha\|\ge2-\delta\ge2-\varepsilon$ and $x^*_i(y\pm x_\alpha)\ge1-\delta/2-\delta/2\ge1-\varepsilon$ holds for all $1\le i\le n$, hence, up to a perturbation argument, $y\pm x_\alpha\in U$.
\end{proof}

Notice that the converse of Theorem~\ref{Theorem: LASQ implies D2P} doesn't hold. In fact $L_1[0,1]$, as already noted, is not $\positiveLASQ$, but it is $\WASQ$ and therefore has the $\DtwoP$.

In the rest of the section, thanks to Theorem~\ref{Theorem: LASQ implies D2P}, we will extend some results already existing in the literature which provide new examples of spaces enjoying the $\DtwoP$.

\begin{corollary}\label{Corollary: Köthe-Bochner spaces}
    Let $S$ be a complete, $\sigma$-finite measure space, $E$ a Köthe function space over $S$ and X a Banach lattice. If $X$ is $\positiveLASQ$, then the Köthe-Bochner space $E(X)$ is $\positiveLASQ$ and therefore has the $\DtwoP$.
\end{corollary}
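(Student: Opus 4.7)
The plan is to verify directly that $E(X)$ is $\positiveLASQ$, whence the $\DtwoP$ conclusion follows immediately from Theorem~\ref{Theorem: LASQ implies D2P}. Recall that $E(X)$ carries a Banach lattice structure under the pointwise order inherited from $X$, so the property makes sense. By Remark~\ref{Remark: equivalent definition LASQ+} it suffices to show: for every $y \in S^+_{E(X)}$ and $\varepsilon>0$, there exists $x \in S^+_{E(X)}$ with $\|y+x\|_{E(X)} \le 1+\varepsilon$.

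The natural idea is to approximate $y$, to within a tolerance $\eta>0$ to be fixed later, by a positive simple function $y_0 = \sum_{i=1}^n \chi_{S_i}\, a_i$, where the $S_i$ are pairwise disjoint measurable sets of finite positive measure and $a_i \in X^+ \setminus \{0\}$. On each piece, $\positiveLASQ$ of $X$ applied to $a_i/\|a_i\|_X \in S^+_X$ yields $b_i \in S^+_X$ satisfying $\|a_i + \|a_i\|_X\, b_i\|_X \le (1+\eta)\|a_i\|_X$. Gluing these pieces together I would set
\[
    \tilde x := \sum_{i=1}^n \chi_{S_i}\, \|a_i\|_X\, b_i \in E(X)^+.
\]
The key pointwise identities are $\|\tilde x(s)\|_X = \|y_0(s)\|_X$ almost everywhere, hence $\|\tilde x\|_{E(X)} = \|y_0\|_{E(X)}$, together with $\|(y_0+\tilde x)(s)\|_X \le (1+\eta)\|y_0(s)\|_X$, which by monotonicity of the $E$-norm upgrades to $\|y_0+\tilde x\|_{E(X)} \le (1+\eta)\|y_0\|_{E(X)}$. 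Combining with $\|y-y_0\|_{E(X)} < \eta$ via the triangle inequality and normalizing $x := \tilde x / \|\tilde x\|_{E(X)} \in S^+_{E(X)}$ then delivers $\|y+x\|_{E(X)} \le 1+\varepsilon$ provided $\eta$ is chosen small enough in terms of $\varepsilon$.

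Once the simple-function reduction is in place the proof is essentially bookkeeping, so the main obstacle I anticipate is justifying the density of positive simple functions of the stated form in $E(X)^+$, which is standard under the usual hypotheses on the Köthe function space $E$ but must be spelled out. Measurability and positivity of $\tilde x$ are immediate from the construction, and positivity of $y_0 + \tilde x$ follows because both summands are non-negative in $X$. An alternative that sidesteps density issues would be to run a partition/refinement argument tailored to the ideal structure of $E$, but the simple-function route seems the cleanest.
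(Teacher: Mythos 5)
Your overall scheme is sound in its local steps: the reduction via Remark~\ref{Remark: equivalent definition LASQ+} to positive $y$ and a one-sided estimate is legitimate for the Banach lattice $E(X)$, the piecewise application of $\positiveLASQ$ on the level sets of a positive simple function is correct, and the bookkeeping (the pointwise identity $\|\tilde x(s)\|_X=\|y_0(s)\|_X$, the ideal property of the $E$-norm, the final normalization) all goes through. The genuine gap is exactly the step you flagged and then set aside as ``standard'': \emph{simple functions are not dense in $E(X)$ for a general K\"othe function space $E$}. Density of $X$-valued simple functions in $E(X)$ is a theorem when $E$ is order continuous, but the corollary is stated for an arbitrary K\"othe function space over a complete $\sigma$-finite measure space, a class that contains $E=L_\infty$ and other non--order-continuous spaces. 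Since every $\positiveLASQ$ space $X$ is infinite dimensional, $S_X$ is not totally bounded, and one can pick a $\delta$-separated sequence $(e_n)$ in $S_X$ and a countable measurable partition $(A_n)$ of positive measure; the function $f=\sum_n\chi_{A_n}e_n$ lies in $L_\infty(S,X)$ but has essential supremum distance at least $\delta/2$ from every simple function. So for such $E$ your approximation step simply cannot be carried out, and your argument only proves the corollary under an additional hypothesis on $E$.

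This is precisely why the paper does not argue by approximation: it adapts Hardtke's measurable-selection proof. There one fixes $f\in S_{E(X)}$ and considers, for almost every $s$, the set $F(s)$ of points $z\in S_X$ witnessing the almost-square estimate against $f(s)$ (here intersected with $S^+_X$ to keep positivity), checks that this multifunction is measurable with non-empty closed values --- this is where completeness of the measure space is used --- and extracts a strongly measurable selection $\tilde f$, which plays the role of your piecewise-constant $\tilde x$ but is defined pointwise almost everywhere rather than on a finite partition. To repair your proof you would either have to add order continuity of $E$ (or some other hypothesis guaranteeing density of simple functions in $E(X)$) to the statement, which weakens the corollary, or replace the simple-function approximation by such a selection argument, at which point you have essentially reproduced the paper's proof.
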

\begin{proof}
    Notice that $E(X)$ is a Banach lattice in a natural way, so the claim is meaningful.

    The proof is just an adaptation of \cite[Theorem~3.1]{H2021-preprint}. So in the following we will point out only the small changes needed to make the proof work.
    \begin{enumerate}
        \item Find the sequence $(z_n)$ in $S^+_X$.
        \item Consider the set $F'(s):=F(s)\cap S^+_Z$ instead of $F(s)$.
        \item Define $g(s,z)$ for $S\in S'$ and $z\in S^+_Z$.
    \end{enumerate}
    With these choices the selection $\tilde f$ is hence defined from $S'$ to $S_Z^+$ and therefore $h\in S^+_{E(X)}$.
\end{proof}

The next result concerns ultrapowers. Let us recall that the ultrapower of a Banach lattice is still a Banach lattice if we take as positive cone the elements that admit representatives such that each coordinate is positive \cite[Page~55]{JL2001}. Keeping this in mind, an obvious adaptation of \cite[Proposition~4.2]{H2018} proves the following.

\begin{corollary}\label{Corollary: ultrapowers}
    Let $X$ be a Banach lattice and $\mathscr F$ a non-principal ultrafilter in $\mathbb N$. If $X$ is $\positiveLASQ$, then the ultrapower $X^\mathscr F$ is $\positiveLASQ$ and therefore has the $\DtwoP$.
\end{corollary}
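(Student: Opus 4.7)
My plan is to apply the $\positiveLASQ$ property of $X$ coordinate-wise in order to produce, for each target element in $S_{X^\mathscr F}$, a positive witness in $S^+_{X^\mathscr F}$; the $\DtwoP$ conclusion is then immediate from Theorem~\ref{Theorem: LASQ implies D2P} applied to $X^\mathscr F$ (which is itself a Banach lattice).

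More concretely, fix $[y]\in S_{X^\mathscr F}$ and $\varepsilon>0$, and choose a representative $(y_n)$ with $\lim_\mathscr F\|y_n\|=1$. After modifying $(y_n)$ on the $\mathscr F$-small set where $\|y_n\|$ is far from $1$ and replacing $y_n$ by $\tilde y_n := y_n/\|y_n\|$ on the remaining $\mathscr F$-large set, I may assume $y_n\in S_X$ for every $n$; this replacement yields the same class because $\|\tilde y_n - y_n\| = |1-\|y_n\||$ tends to $0$ along $\mathscr F$. Since $X$ is $\positiveLASQ$, for each $n$ I can pick $x_n\in S^+_X$ with $\|y_n\pm x_n\|\le 1+\varepsilon$. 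Setting $[x]:=[x_n]$ then produces an element of $X^\mathscr F$ satisfying $\lim_\mathscr F\|x_n\|=1$ and
\begin{equation*}
    \|[y]\pm[x]\| \;=\; \lim_\mathscr F\|y_n\pm x_n\| \;\le\; 1+\varepsilon.
\end{equation*}
Using the description of the positive cone recalled just before the statement of the corollary, $[x]$ belongs to $S^+_{X^\mathscr F}$, so this establishes that $X^\mathscr F$ is $\positiveLASQ$, and Theorem~\ref{Theorem: LASQ implies D2P} then delivers the $\DtwoP$.

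The only point that actually requires care is the identification of $S^+_{X^\mathscr F}$: one must know that an ultrapower class is positive as soon as it admits a coordinate-wise positive representative, which is exactly the convention fixed before the corollary and cited from \cite[Page~55]{JL2001}. Beyond this bookkeeping, the argument is a direct transfer of the defining property from $X$ to $X^\mathscr F$, and this is the sense in which it is an ``obvious adaptation'' of \cite[Proposition~4.2]{H2018}. I do not anticipate any genuine obstacle, since the whole content is that $\positiveLASQ$ is a local condition that passes to ultrapowers by choosing the positive witnesses coordinate-wise.
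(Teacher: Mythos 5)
Your proof is correct and is essentially the argument the paper has in mind: the paper gives no explicit proof, only the remark that an ``obvious adaptation'' of \cite[Proposition~4.2]{H2018} works, and your coordinate-wise selection of positive witnesses (after normalizing a representative on an $\mathscr F$-large set) together with the stated description of the positive cone of $X^\mathscr F$ is exactly that adaptation. The $\DtwoP$ conclusion via Theorem~\ref{Theorem: LASQ implies D2P} is handled the same way in both.
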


Before stating the last result, let us introduce some notation. Given $y\in S_X$, define $n(X,u)$ as the largest non-negative real number $r$ satisfying
\begin{equation*}
    rx\le\sup\{|x^*(x)|:x^*\in S_X\text{ and }x^*(y)=1\}.
\end{equation*}

\begin{corollary}\label{Corollary: projective tensor product}
    Let $X$ be a Banach lattice and $Y$ an AL-space such that there is $y^*\in S_{Y^*}$ satisfying $n(Y^*,y^*)=1$. If $X$ is $\positiveASQ$, i.e., for every $y_1,\ldots,y_n\in S_X$ and $\varepsilon>0$, there exists $x\in S^+_X$ satisfying $\|y_i\pm x\|\le1+\varepsilon$ for $1\le i\le n$, then $X\hat{\otimes}_\pi Y$ is $\positiveLASQ$ and therefore has the $\DtwoP$.
\end{corollary}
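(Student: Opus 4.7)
The strategy is to prove that $X\hat{\otimes}_\pi Y$ is $\positiveLASQ$ and then invoke Theorem~\ref{Theorem: LASQ implies D2P}. Since $Y$ is an AL-space, it is isometrically lattice-isomorphic to some $L_1(\mu)$, so $X\hat{\otimes}_\pi Y$ identifies isometrically (and as a Banach lattice) with the Bochner space $L_1(\mu,X)$; in particular its positive cone is the closure of finite sums of positive elementary tensors $x\otimes y$ with $x\in X^+$, $y\in Y^+$, and every positive element admits, for any $\delta>0$, a finite representation $\sum_{i=1}^n\lambda_i x_i\otimes y_i$ with $x_i\in S_X^+$, $y_i\in S_Y^+$ (disjointly supported on the $Y$-side), $\lambda_i>0$, and $\sum_i\lambda_i$ within $\delta$ of the norm of the approximant. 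By Remark~\ref{Remark: equivalent definition LASQ+} it is enough, given $z\in S_{X\hat{\otimes}_\pi Y}^+$ and $\varepsilon>0$, to exhibit $w\in S^+$ with $\|z+w\|_\pi\le 1+\varepsilon$.

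First I would approximate $z$ to within $\varepsilon/4$ in the $\pi$-norm by a positive simple representation $z_0=\sum_{i=1}^n\lambda_i x_i\otimes y_i$ of the form above, with $s:=\sum_i\lambda_i$ satisfying $|s-1|\le\varepsilon/4$. Applying $\positiveASQ$ of $X$ to the finite family $\{x_1,\dots,x_n\}$ then produces a single $x_0\in S_X^+$ with $\|x_i\pm x_0\|\le 1+\varepsilon/4$ for every $i$. On the $Y$-side I define $y_0:=s^{-1}\sum_{j=1}^n\lambda_j y_j$; AL-additivity on the positive cone gives $\|y_0\|_Y=s^{-1}\sum_j\lambda_j=1$, so $w:=x_0\otimes y_0$ lies in $S_{X\hat{\otimes}_\pi Y}^+$.

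The crucial algebraic identity is then
\begin{equation*}
    z_0+w=\sum_{i=1}^n\lambda_i\,x_i\otimes y_i+s^{-1}\sum_{i=1}^n\lambda_i\,x_0\otimes y_i=\sum_{i=1}^n\lambda_i\,\bigl(x_i+s^{-1}x_0\bigr)\otimes y_i,
\end{equation*}
so that $\|z_0+w\|_\pi\le\sum_i\lambda_i\,\|x_i+s^{-1}x_0\|$. Combining $\|x_i+s^{-1}x_0\|\le\|x_i+x_0\|+|1-s^{-1}|$ with the $\positiveASQ$ bound, this is at most $s\bigl(1+\varepsilon/4+|1-s^{-1}|\bigr)$, which a finer initial choice of tolerances can make $\le 1+\varepsilon/2$. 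Together with $\|z-z_0\|_\pi\le\varepsilon/4$ this gives $\|z+w\|_\pi\le 1+\varepsilon$.

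The main obstacle is securing the initial positive simple-function representation with total weight close to $\|z\|_\pi$: one must know that the infimum defining the projective norm is essentially realized on positive elementary tensors when $z\ge 0$. This is exactly where the AL-structure of $Y$ is essential, since through the Bochner identification it reduces to the sharp identity $\|\sum_i x_i\chi_{A_i}\|_{L_1(\mu,X)}=\sum_i\|x_i\|_X\mu(A_i)$ for positive simple functions; it is also at this step that the numerical-index hypothesis $n(Y^*,y^*)=1$ appears in the statement being adapted, while in the AL setting it is automatically satisfied by the order unit $y^*=\mathbf{1}$ of $Y^*$.
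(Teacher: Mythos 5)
Your proposal is correct and follows essentially the same route as the paper's adaptation of \cite[Proposition~2.11]{LLR2017}: reduce to a positive element via Remark~\ref{Remark: equivalent definition LASQ+}, approximate it by a convex combination $\sum_i\lambda_i x_i\otimes y_i$ of positive normalized elementary tensors, apply $\positiveASQ$ to the $x_i$, and absorb the resulting elementary tensor $x_0\otimes\sum_i\lambda_i y_i$ using AL-additivity on the $Y$-side. The only cosmetic difference is that you obtain the density of $\conv(S^+_X\otimes S^+_Y)$ in $S^+_{X\hat{\otimes}_\pi Y}$ through the identification $X\hat{\otimes}_\pi Y\cong L_1(\mu,X)$, where the paper cites \cite[1E~(ii)]{F1974} directly.
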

\begin{proof}
    Since $Y$ is an AL-space, then $X\hat{\otimes}_\pi Y$ is a Banach lattice \cite[Theorem~2B]{F1974}.

    The rest of the proof is an adaptation of \cite[Proposition~2.11]{LLR2017}. So let us point out the main differences.
    \begin{enumerate}
        \item Thanks to Remark~\ref{Remark: equivalent definition LASQ+}, we only need to consider the case $u\in S^+_{X\hat{\otimes}_\pi Y}$.
        \item Since $\conv(S^+_X\otimes S^+_Y)$ is dense in $S^+_{X\hat{\otimes}_\pi Y}$ \cite[1E~(ii)]{F1974}, we can find $v=\sum_{i=1}^n\lambda_ix_i\otimes y_i$ such that $\|u-v\|\le\varepsilon$, $x_i\in S^+_X$ and $y_i\in S^+_Y$ for $1\le i\le n$.
        \item As $X$ is $\positiveASQ$, we can find $x\in S^+_X$ such that $\|x_i\pm x\|\le1+\varepsilon$ holds for $1\le i\le n$.
        \item With these choices, $z:=x\otimes\sum_{i=1}^n\lambda_iy_i\in S^+_X$ thanks to \cite[1E~(ii)]{F1974} and $\|u\pm z\|\le1+2\varepsilon$.
    \end{enumerate}
\end{proof}

\end{document}